\newcommand{\comment}[1]{}
\newtheorem{lem}{Lemma}
\newtheorem{propn}{Proposition}
\newtheorem{cor}{Corollary}
\newtheorem{thm}{Theorem}
\newtheorem*{thm2}{Theorem 2}
\theoremstyle{remark}
\theoremstyle{definition}
\newtheorem{defn}{Definition}[section]
\newcommand{\R}{\mathbb R}
\newcommand{\N}{\mathbb N}
\DeclareMathOperator{\supp}{supp}
\newcommand{\vp}{\varphi}
\newcommand{\D}{\delta}
\newcommand{\VE}{\varepsilon}
\newcommand{\A}{\alpha}
\newcommand{\lm}{\lambda}
\newcommand{\subs}{\subseteq}
\newcommand{\be}{\begin{equation}}
\newcommand{\ee}{\end{equation}}
\newcommand{\bee}{\begin{equation*}}
\newcommand{\eee}{\end{equation*}}
\begin{document}
\title{Distance Graphs and sets of positive upper density in $\R^d$}
\author{Neil Lyall\quad\quad\quad\'Akos Magyar}
\thanks{The first author was partially supported by grant NSF-DMS 1702411 and the Simons Foundation Collaboration Grant for Mathematicians 245792. The second author was partially supported by grant NSF-DMS 1600840 and ERC-AdG 321104.}

\address{Department of Mathematics, The University of Georgia, Athens, GA 30602, USA}
\email{lyall@math.uga.edu}
\address{Department of Mathematics, The University of Georgia, Athens, GA 30602, USA}
\email{magyar@math.uga.edu}

\subjclass[2010]{11B30}


\begin{abstract}
We present a sharp extension of a result of Bourgain on finding configurations of $k+1$ points in general position in measurable subset of $\R^d$ of positive upper density whenever $d\geq k+1$ to all proper $k$-degenerate distance graphs.
 \end{abstract}
\maketitle

\setlength{\parskip}{5pt}


\section{Introduction}\label{intro}

\subsection{Background}

A result of Katznelson and Weiss \cite{FKW} states that
if $A\subseteq\mathbb{R}^2$ has positive upper Banach density, then its distance set
 $\{|x-x'|\,:\, x,x'\in A\}$ contains all large numbers.
Recall that the \emph{upper Banach density} of a measurable set $A\subseteq\R^d$ is defined by
\be\label{BD}
\D^*(A)=\lim_{N\rightarrow\infty}\sup_{t\in\R^d}\frac{|A\cap(t+Q_N)|}{|Q_N|},\ee
where $|\cdot|$ denotes Lebesgue measure on $\R^d$ and $Q_N$ denotes the cube $[-N/2,N/2]^d$.

Note that the distance set any set of positive Lebesgue measure in $\R^d$ automatically contains all sufficiently small numbers (by the Lebesgue density theorem) and  that it is easy to construct a set of positive upper density which does not contain a fixed distance  by  placing small balls centered on an appropriate square grid.

This result was later reproved using Fourier analytic techniques by Bourgain in \cite{B} where he established the following more general result for finite point configurations  in general position.

\begin{thm}[Bourgain \cite{B}]\label{BourSimp}
Let $\Delta_k\subseteq\R^{d}$ be a fixed collection of $(k+1)$ points in general position.

If $A\subseteq\R^d$ has positive upper Banach density and $d\geq k+1$, then
 there exists a threshold $\lm_0=\lm_0(A,\Delta_k)$ such that $A$ contains an isometric copy of $\lm\cdot\Delta_k$ for all $\lambda\geq \lm_0$.
\end{thm}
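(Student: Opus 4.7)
The plan is to follow the Fourier-analytic strategy pioneered by Bourgain. Fix $\delta < \D^*(A)$ and write $\Delta_k = \{0, v_1, \ldots, v_k\}$. By the definition of upper Banach density, for every $N$ there is a translate in which $A$ has density at least $\delta$ on $Q_N$; so after translating, it suffices to show that a set $A_0 \subs Q_N$ of density at least $\delta$ contains an isometric copy of $\lm\Delta_k$ whenever $\lm \geq \lm_0$ and $N$ is sufficiently large in terms of $\lm$. To count such copies one introduces the multilinear form
\[
I_\lm(f_0,\ldots,f_k) := \int_{O(d)} \int_{\Rn} f_0(x) \prod_{i=1}^k f_i(x + \lm R v_i)\, dx\, dR,
\]
where $dR$ is normalized Haar measure on $O(d)$. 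The goal is the lower bound $I_\lm(1_{A_0},\ldots,1_{A_0}) \geq c(\delta)\,|Q_N|$ for all $\lm\ge \lm_0$.

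The mechanism is a smoothing/decomposition argument. Let $\psi_\eps$ be a nonnegative bump with $\int\psi_\eps=1$ of scale $\eps$, and split $1_{A_0} = f + g$ with $f := 1_{A_0} * \psi_\eps$. Expanding $I_\lm$ multilinearly produces a main term $I_\lm(f,\ldots,f)$ plus $2^{k+1}-1$ error terms each containing at least one factor of $g$. For the main term, since $f$ is continuous, $0\le f\le 1$, and $\int f \geq \delta |Q_N|$, one obtains $I_\lm(f,\ldots,f) \gtrsim \delta^{k+1}|Q_N|$ by exploiting that averaging against the configuration measure at scale $\lm$ acts approximately as an average preserving mass, provided $\eps \ll 1/\lm$ and $N$ is large enough to neglect boundary losses.

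For the error terms one moves to the Fourier side, where $I_\lm$ is expressed through the measure $\si_\lm$ on $(\Rn)^k$ carried by the variety of $k$-tuples $(\lm R v_1, \ldots, \lm R v_k)$, $R \in O(d)$. The key analytic input is the decay estimate $|\widehat{\si_\lm}(\xi_1,\ldots,\xi_k)| \to 0$ whenever $\lm|\xi_j| \to \infty$ for some $j$, obtained via stationary phase on $O(d)$. A standard three-scale frequency decomposition then handles the error: high frequencies $|\xi|\geq 1/\eps$ are suppressed by the smoothing, low frequencies $|\xi|\le\eta$ contribute negligibly after cancelling the local mean of $g$, and intermediate frequencies $\eta\le|\xi|\le 1/\eps$ are controlled by choosing $\lm$ large relative to $\eta$ and $\eps$ so that the oscillatory decay of $\widehat{\si_\lm}$ applies. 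Summing gives $I_\lm(1_{A_0},\ldots,1_{A_0}) \geq c\delta^{k+1}|Q_N| - o_{\lm\to\infty}(1)$, which is strictly positive for $\lm\geq\lm_0$.

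The hardest step, and the one that dictates the hypothesis $d\ge k+1$, is the oscillatory-integral estimate for $\widehat{\si_\lm}$. For $k=1$ this reduces to the classical bound $|\widehat{\si_\lm}(\xi)| \lesssim (\lm|\xi|)^{-(d-1)/2}$ for surface measure on $S^{d-1}$. For $k\ge 2$ one factors $\si_\lm$ via Gram--Schmidt as an iterated product of surface measures on spheres of decreasing ambient dimension --- the first vertex on a sphere in $\Rn$, the second on a sphere in the affine complement of the line through the first, and so on --- the $j$-th factor being a sphere of dimension $d-j$. Iterating the one-dimensional stationary-phase decay along these factors yields the required bound, and the hypothesis $d\ge k+1$ is precisely what ensures that the last sphere still has positive dimension, so that genuine Fourier decay is available at every step.
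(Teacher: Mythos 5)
Your oscillatory-integral input (factoring the configuration measure into iterated spheres via Gram--Schmidt, with $d\ge k+1$ guaranteeing that the last sphere still has positive dimension) is exactly the mechanism the paper uses. But the core of your argument --- a single-scale decomposition $1_{A_0}=f+g$ with $f=1_{A_0}*\psi_\eps$, a main term bounded below, and error terms that are $o_{\lm\to\infty}(1)$ --- cannot close, and the bound you extract from it is false as stated. The two requirements on $\eps$ are incompatible: for the main term $I_\lm(f,\dots,f)\gtrsim\delta^{k+1}|Q_N|$ you need $f$ to be essentially constant at the scale $\lm$ of the configuration, i.e. $\eps\gg\lm$ (your condition ``$\eps\ll1/\lm$'' is not the right one in either normalization); but then $g$ carries Fourier mass at all frequencies $|\xi|\gtrsim1/\eps\ll1/\lm$, which is precisely the range in which $\widehat{\si_\lm}$ has no decay, so the error terms are not small. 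Conversely, if $\eps\ll\lm$ the error terms are fine but the main term fails: take $A_0$ a union of balls of radius $\sim\delta^{1/d}\lm$ centered on a grid of spacing $\sim\lm$ arranged so that $\lm\cdot\Delta_k$ does not fit. The same example, rescaled, shows that your opening reduction --- that it suffices to treat an arbitrary $A_0\subseteq Q_N$ of density $\delta$, with $\lm_0$ depending only on $\delta$ --- cannot work: the threshold $\lm_0$ in the theorem genuinely depends on the set $A$, not only on $\D^*(A)$.

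What is missing is the step that converts the failure of a single scale into a contradiction, and there are two ways to supply it. Bourgain's route (the paper's second proof) shows that if $A$ contains no copy of $\lm\cdot\Delta_k$ for some $\lm\in[a,b]$, then a definite amount ($\gg|A|^{2k+2}$) of the $L^2$ mass of $\widehat{1_A}$ concentrates on the annulus $\VE^2/b\le|\xi|\le1/\VE^2a$; running this over $J\gg\VE^{-2}$ scales $\lm_1\gg\lm_2\gg\cdots$ separated by factors of $\VE^4$ produces disjoint annuli whose total mass would exceed $\|\widehat{1_A}\|_2^2\le1$. The paper's first proof avoids the pigeonhole altogether: the two inequalities in the definition of upper Banach density together produce, for every $\lm\ge\lm_0(A,\VE)$, a cube on which $A$ has nearly maximal density \emph{and} is uniformly distributed in mean square at the small scale $\VE^6\lm$; this uniformity (smallness of $\|f_A\|_{U^1(\VE^6\lm)}$), rather than mere positivity of density, is what replaces your main-term estimate, and the gap between the scales $\VE^6\lm$ and $\lm$ is bridged by the $(\lm|\xi|)^{-1/2}$ decay in the generalized von Neumann inequality. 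One of these ingredients must be added before your sketch becomes a proof.
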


Recall that a point configuration $\Delta_k'$ is said to be an isometric copy of $\lm\cdot \Delta_k$
if there exists a bijection $\phi:\Delta_k\to \Delta_k'$ such that $|\phi(v)-\phi(w)|=\lm\,|v-w|$ for all $v,w\in\Delta_k.$


These results may be viewed as initial results in \emph{geometric Ramsey theory} where, roughly speaking, one shows that ``large" but otherwise arbitrary sets necessarily contain certain geometric configurations. Recently there has been a number of results in this direction in various contexts, see \cite{BU}, \cite{IO}, and \cite{IP}. One of the aims of this article is to present a common extension for measurable subsets of Euclidean spaces of positive upper density. At the same time we present a new approach, based on a simple notion of uniform distribution attached to an appropriate scale. For another instance of this new approach see \cite{Product} where configurations of points that form the vertices of a rigid geometric square, and more generally the direct product of any two finite point configurations in general position, are addressed.


\subsection{Distance Graphs and Main Result}

A \emph{distance graph} $\Gamma=\Gamma(V,E)$ is a connected finite graph with vertex set $V$ contained in $\mathbb{R}^d$ for some $d\geq1$.
We say that  $\Gamma$ is \emph{$k$-degenerate} if each of its subgraphs  contain a vertex with degree at most $k$ and refer to the smallest $k$ such that it is $k$-degenerate as the \emph{degeneracy} of $\Gamma$.
It is easy to see that if a given graph is $k$-degenerate, then there exists
an ordering of its vertex set $V=\{v_0,v_1,\dots,v_n\}$ in such a way that $|V_j|\leq k$ for all $1\leq j\leq n$, where
\be V_j:=\{v_i\,:\, (v_i,v_j)\in E \ \text{with} \ 0\leq i<j\}\ee
denotes the set of predecessors of the vertex $v_j$. In this article we shall always assume that the vertices of any given $k$-degenerate graph have been ordered as such.
Finally, we shall refer to a distance graph as \emph{proper} if for every $1\leq j\leq n$, the set of vertices $v_j\cup V_j$, namely $v_j$ together with its predecessors, are in general position.

Given a distance graph $\Gamma=\Gamma(V,E)$ and  $\lm>0$ we will say that  $\Gamma'=\Gamma'(V',E')$ is \emph{isometric} to $\lm\cdot\Gamma$ 
if there exists a bijection $\phi:V\to V'$ such that $(v,w)\in E$ if and only if $(\phi(v),\phi(w))\in E'$ and  $|\phi(v)-\phi(w)|=\lm |v-w|$.

\smallskip

The main result of this article is the following

\begin{thm}\label{infinite}
Let $\Gamma=\Gamma(V,E)$ be a proper $k$-degenerate distance graph and $d\geq k+1$.
\begin{itemize}
\item[(i)]
If $A\subseteq\R^d$ has positive upper Banach density, then
 there exists  $\lm_0=\lm_0(A,\Gamma)$ such that $A$ contains an isometric copy of $\lm\cdot\Gamma$ for all $\lambda\geq \lm_0$.
\item[(ii)]
If $A\subseteq[0,1]^d$ with $|A|>0$, then $A$ will contain an isometric copy of $\lm\cdot\Gamma$ for all $\lambda$ in some interval of length at least $\exp(-C_\Gamma |A|^{-C|V|})$.
\end{itemize}
\end{thm}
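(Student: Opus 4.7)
The plan is to establish (ii) and derive (i) via the standard reduction: given $A \subseteq \R^d$ with $\D^*(A) > 0$, one selects a cube $t + Q_N$ whose relative density of $A$ is close to $\D^*(A)$, rescales to $[0,1]^d$, and applies (ii); as $N \to \infty$ the $\lm$-window produced in the rescaled problem sweeps over all sufficiently large $\lm$ in the original, giving the threshold $\lm_0$.

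For (ii), set $\delta := |A|$ and induct along the prescribed vertex ordering $v_0, \ldots, v_n$, with $m_j := |V_j| \leq k$. Given a partial embedding $\mathbf{x} = (x_0, \ldots, x_{j-1}) \in A^j$ of $\lm \cdot \Gamma_{j-1}$, adjoining $v_j$ requires $x_j \in A$ on the locus
\[
\mS^{V_j}_\lm(\mathbf{x}) := \{\, y \in \R^d : |y - x_i| = \lm\, |v_j - v_i| \text{ for all } v_i \in V_j\}.
\]
The properness hypothesis (general position of $\{v_j\} \cup V_j$) together with $m_j \leq k \leq d - 1$ ensures this is a smooth, transversely-cut $(d - m_j)$-dimensional sphere of positive dimension and radius comparable to $\lm$. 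The analytic object of interest is the multilinear counting functional
\[
N_\Gamma(\mathbf{1}_A, \lm) := \int \mathbf{1}_A(x_0) \prod_{j=1}^{n} \biggl(\int \mathbf{1}_A(x_j)\, d\sigma^{V_j}_\lm(x_j)\biggr) dx_0,
\]
where $d\sigma^{V_j}_\lm$ is the normalized surface measure on $\mS^{V_j}_\lm(\mathbf{x})$; positivity of this functional at some $\lm$ in the prescribed interval produces the desired isometric copy of $\lm \cdot \Gamma$ in $A$.

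To establish positivity I would split $\mathbf{1}_A = \delta\phi + g$, with $\phi$ a smooth approximation of $\mathbf{1}_{[0,1]^d}$ satisfying $\int \phi = 1$ and $g$ of mean zero, and expand $N_\Gamma$ into a principal term of size $\gtrsim \delta^{n+1}$ plus $2^{n+1}-1$ error terms each carrying at least one $g$-factor. The errors would be bounded via Fourier analysis of the tower of spherical convolution operators: the transform $\widehat{d\sigma^{V_j}_\lm}(\xi)$ decays like $(1 + \lm|\xi|)^{-(d - m_j - 1)/2}$ in the directions tangent to the affine span of $\{x_i : v_i \in V_j\}$ by stationary phase, and after averaging $\lm \mapsto \psi_\varepsilon(\lm - \lm_0)$ of width $\varepsilon$ one gains arbitrary polynomial decay $(\varepsilon|\xi|)^{-N}$ in the residual radial direction via integration by parts in $\lm$. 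Iterated $L^2$--$L^\infty$ bounds on the composition of these operators control each error term by a power of $(\varepsilon\delta)^{-1}$; balancing against the principal term forces $\varepsilon \geq \exp(-C_\Gamma \delta^{-C|V|})$, matching the stated interval length.

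The principal obstacle I anticipate is the coordinated management of $\lm$-averaging across all $n$ spherical operators, which share the single scale parameter $\lm$: the Fourier analysis cannot be executed vertex-by-vertex in isolation, and one must combine the averaged decay with chained $L^2$ estimates on $g$ in a way that keeps the compounded error constants under control. A secondary technical point is establishing uniform non-degeneracy of the sphere intersections---and hence uniformity of the stationary-phase decay---which flows from the general-position hypothesis but must be handled carefully as the base configuration $\mathbf{x}$ varies.
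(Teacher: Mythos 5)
Your setup (the iterated spherical counting functional, nondegeneracy of the spheres from properness, a main term of size $\delta^{n+1}$ plus error terms controlled by spherical Fourier decay) matches the paper's, but the proposal has two genuine gaps. First, the reduction of (i) to (ii) does not work: part (ii) only produces \emph{some} interval of admissible $\lambda$ of the stated length, at an unspecified location, so rescaling cubes $t+Q_N$ and letting $N\to\infty$ yields a sequence of intervals whose union need not contain a half-line $[\lambda_0,\infty)$. Part (i) is the stronger ``all large $\lambda$'' statement and needs its own argument; in the paper it is proved directly by observing that the definition of upper Banach density forces $A$ (suitably rescaled) to be uniformly distributed at scale $\VE^6\lambda$ for \emph{every} large $\lambda$, after which a single generalized von Neumann estimate applies.

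Second, and more centrally, your error analysis has no mechanism to control the intermediate frequencies $1\lesssim|\xi|\lesssim\lambda^{-1}$, and no mechanism that could produce the exponential bound $\exp(-C_\Gamma\delta^{-C|V|})$. Writing $1_A=\delta\phi+g$ with $g$ of mean zero does not make $\widehat g(\xi)$ small for $|\xi|$ between $1$ and $1/\lambda$; the spherical measures at scale $\lambda$ have no decay there, and averaging $\lambda$ over a window of width $\VE$ only helps for $|\xi|\gtrsim(\VE\lambda)^{-1}$. This low/intermediate-frequency contribution is exactly the ``structured'' part that cannot be beaten by stationary phase, and the paper handles it with a multi-scale pigeonhole: either an energy increment (Lemma 2), which after at most $C\VE^{-2}$ refinements by factors of $c\VE^7$ locates a scale $L_j$ at which $A$ is uniformly distributed on most cubes of $\mathcal G_j$ --- whence the counting function is positive for all $\lambda\in(\VE^{-6}L_{j+1},\VE L_j)$ --- or, in the second proof, the observation that each bad $\lambda$ forces $\gg\delta^{2n+2}$ of the $L^2$-mass of $\widehat{1_A}$ onto an annulus, so that $\gg\VE^{-2}$ bad scales separated by factors $\VE^4$ would exceed Plancherel. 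In both versions the interval length $\exp(-C_\Gamma\delta^{-Cn})$ arises as (scale ratio)$^{(\text{number of pigeonhole steps})}=(c\VE^{7})^{C\VE^{-2}}$ with $\VE\approx\delta^{n+1}$; your proposed ``balancing'' of a single averaged estimate against the main term cannot generate this iteration count, and indeed the claimed bound of the errors by ``a power of $(\VE\delta)^{-1}$'' is not a smallness statement at all. To repair the argument you need to add the scale-pigeonhole step (in physical space via $U^1(L)$ uniformity on a partition into cubes, or in frequency space via disjoint annuli) before the spherical decay estimates can close the argument.
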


Note that in both parts of Theorem \ref{infinite} the  dimension $d$ is restricted only by the ``level of degeneracy" of the given distance graph  and not on the number of its vertices which could in fact be arbitrarily large. It is important to further observe that the length of the interval of dilations guaranteed by Part (ii) of Theorem \ref{infinite} depends only on the measure of $A$ and not on the set $A$ itself.

\comment{
\begin{thm}\label{infinite}
Let $\Gamma$ be a proper $k$-degenerate distance graph.
If $A\subseteq\R^d$ has positive upper Banach density and $d\geq k+1$, then
 there exists  $\lm_0=\lm_0(A,\Gamma)$ such that $A$ contains an isometric copy of $\lm\cdot\Gamma$ for all $\lambda\geq \lm_0$.
\end{thm}

\comment{
Recall that the \emph{upper Banach density} of a measurable set $A\subseteq\R^d$ is defined by
\be\label{BD}
\D^*(A)=\lim_{N\rightarrow\infty}\sup_{t\in\R^d}\frac{|A\cap(t+Q_N)|}{|Q_N|},\ee
where $|\cdot|$ denotes Lebesgue measure on $\R^d$ and $Q_N$ denotes the cube $[-N/2,N/2]^d$.
}


\begin{thm}\label{compact}
Let $\Gamma$ be a proper $k$-degenerate distance graph and $0<\A<1$.
If $A\subseteq[0,1]^d$ with $d\geq k+1$
, then $A$ will contain an isometric copy of $\lm\cdot\Gamma$ for all $\lambda$ in some interval of length at least $\exp(-C_\Gamma |A|^{-Cn})$.
\end{thm}
}


Intuitively one should visualize a distance graph with edges made of rigid rods which can freely turn around the vertices, and an isometric embedding of a distance graph into a set $A\subseteq\R^{d}$ as a folding of the graph  so that all of its vertices are supported on $A$.

As mentioned above various special cases of our main result have been established, albeit in different contexts. Indeed, in \cite{BU} the embedding of large copies of trees ($1$-degenerate distance graphs) was shown for dense subsets of the integer lattice. In \cite{IO} it was shown that measurable subsets $A\subs [0,1]^d$ of Hausdorff dimension larger than $\frac{d+1}{2}$ contain an isometric copy of $\lambda\cdot\Gamma$ for all $\lambda$ in some interval, in the special case when $\Gamma$ is a finite path. Very recently, parallel to our work, embedding of bounded degree distance graphs was addressed for subsets of vector spaces over finite fields \cite{IP}.

\medskip

\noindent
\underline{{\bf Examples of Distance Graphs}}

\smallskip

\begin{enumerate}

\item[1.] A non-empty connected graph is $1$-degenerate if and only if it is a tree (contains no cycles). Any tree with vertices in $\R^d$ with $d\geq1$ is isometric to  a proper $1$-degenerate distance graph in $\R^2$.

\smallskip

\item[2.] Cycles with vertices in $\R^d$ with $d\geq1$ form $2$-degenerate distance graphs, but these are not necessarily isometric to a proper $2$-degenerate distance graph in $\R^d$ for any $d\geq1$. Indeed, if $V=\{0,1,2\}\subseteq\R$ and $E=\{(0,1),(1,2),(0,2)\}$, then this defines just such a distance graph.

\smallskip

\item[3.]
If $V=\{(i,j)\,:\,0\leq i,j\leq n\}\subseteq\R^2$ and $E=\{((i,j),(i'j'))\,:\, |i-i'|+|j-j'|=1\}$, then this ``$2$-dimensional grid" forms a proper $2$-degenerate distance graph in $\R^2$.

In general, one can construct a proper $2$-degenerate distance graph in $\R^3$ as follows: Start with any proper cycle with vertices in $\R^3$, such as a proper triangle (three vertices in general position) or four vertices forming a ``non-rigid" square (no diagonal edges), and at every step attach another proper cycle (or tree) to any of the edges (or vertices) of the graph constructed at the previous step.

\smallskip

\item[4.] A complete graph with vertices $\{v_0,\dots,v_k\}\subseteq\R^{k}$ forms a
proper $k$-degenerate distance graph if and only if $\{v_0,\dots,v_k\}$ are in general position. Another example of a proper $k$-degenerate distance graph in $\R^{k}$ is the ``$k$-dimensional grid" with vertices $V=\{(i_1,\dots,i_k)\,:\,0\leq i_1,\dots,i_k\leq n\}\subseteq\R^k$ and edges $E=\{((i_1,\dots,i_k),(i'_1,\dots,i'_k))\,:\, |i_1-i'_1|+\dots+|i_k-i'_k|=1\}$.

More generally, one can construct a proper $k$-degenerate distance graph in $\R^{k+1}$ as follows: Start with any known proper $k$-degenerate distance graph with vertices in $\R^{k+1}$ and at every step attach another proper $\ell$-degenerate distance graph with $\ell\leq k$ to any of the faces, edges, or vertices of the graph constructed at the previous step.
\end{enumerate}

\medskip

\noindent
\emph{Remark on the Sharpness of the dimension condition in Theorem \ref{infinite}}

Let $e_1,\ldots,e_k$ be the standard basis vectors of $\R^k$ and $\Delta_+$ and $\Delta_-$ denote the complete graphs with vertices $\{0,e_1,e_2,\ldots,e_k\}$ and $\{0,-e_1,e_2,\ldots,e_k\}$ respectively. It is clear that $\Gamma=\Delta_+\cup\Delta_-$ then defines a proper $k$-degenerate distance graph with the property that any isometric copy of $\lambda\cdot\Gamma$ in $\R^k$ must contain three collinear points, i.e. a copy of $\{-\lambda e_1,0,\lambda e_1\}$ obtained by a translation and a rotation. It was shown in \cite{B} that there are sets of positive upper density $A\subseteq \R^k$, for any $k$, which do not contain such configurations for all large $\lambda$. This example shows the sharpness of the dimension condition $d\geq k+1$ in Theorem \ref{infinite}.


\section{A Counting Function and Generalized von-Neumann inequality}\label{cf}
Let  $\Gamma=\Gamma(V, E)$ be a fixed  proper $k$-degenerate distance graph with vertex set $V=\{v_0,v_1,\dots,v_n\}$ with $v_0=0$ in $\R^d$ with $d\geq k+1$.
As our arguments are analytic, we need to define a measure on the configuration space of all isometric copies of $\Gamma$. For each $(v_i,v_j)\in E$ let $t_{ij}=|v_i-v_j|^2$. The configuration space of all isometric copies of $\Gamma$, with the vertex $v_0$ remaining fixed at $0$, namely
 \be
 S_{\Gamma}:=\{(x_0,x_1,\dots,x_n)\in\R^{d(n+1)}\,: x_0=0 \ \text{and} \ |x_i-x_j|^2= t_{ij} \ \text{for all $i,j$ for which $(v_i,v_j)\in E$}\}
 \ee
is clearly a real subvariety. Since $\Gamma$ is proper, there exists a points $(x_0,x_1,\dots,x_n)\in S_{\Gamma}$ such that for all $1\leq j\leq n$, the sets $\overline{X}_j:=x_j\cup X_j$ where $X_j:=\{x_i\,:\,v_i\in V_j\}$ are in general position and each of the spheres
\be
S_{j}:=\{x\in\R^d\,:\, |x-x_i|^2=t_{ij} \ \text{for all $x_i\in X_j$}\}
\ee
 have dimension $d-|X_j|$, which is at least $1$ if our distance graph $\Gamma$ is $k$-degenerate and $d\geq k+1$.

 It is easy to see that the radius $r_{j}$ of $S_{j}$ 
is positive and equals the distance from $x_j$ to the affine subspace $\text{span }X_j$.
If
$X_j=\{x_{i_1},\dots,x_{i_\ell}\}$, then the fact that $\overline{X}_j=\{x_{i_1},\dots,x_{i_\ell},x_j\}$ is in general position ensures that the volume of the $\ell$-dimensional fundamental parallelepiped determined by the vectors $\{x_j-x_{i_1},\dots,x_j-x_{i_\ell}\}$ is non-zero.
Since this volume is equal to the square root of the determinant of the inner product matrix
\[\det\{(x_j-x_{i_{m_1}})\cdot(x_j-x_{i_{m_2}})\}_{1\leq m_1,m_2\leq \ell}\]
it follows that
\be
r_{j}=\sqrt{\dfrac{\det\{(x_j-x_{i_{m_1}})\cdot(x_j-x_{i_{m_2}})\}_{1\leq m_1,m_2\leq \ell}}{\det\{(x_{i_\ell}-x_{i_{m_1}})\cdot(x_{i_\ell}-x_{i_{m_2}})\}_{1\leq m_1,m_2\leq \ell-1}}}
\ee
and hence that $r_j$ in fact depends continuously on $X_j$.

An important consequence of the discussion above that there exists a constant $r_\Gamma>0$ and compactly supported functions $\eta_j\in C^\infty(\R^d)$ with $0\leq\eta_j\leq1$ such that   for all $(x_0,x_1,\dots,x_n)\in S_{\Gamma}$ with $x_j\in\supp\eta_j$, the corresponding spheres $S_j$ will all have radius $r_j\geq r_\Gamma$.


\begin{defn}[Localized Counting Function]\label{defn}
For any $0<\lm\ll1$
and functions
\[f_0,f_1,\dots,f_{n}:[0,1]^{d}\to\R\] with $d\geq k+1$ we  define
\be\label{LocalCount}
T_{\Gamma}(f_0,f_1,\dots,f_{n})(\lm)
=\iint\cdots\int f_0(x)f_1(x-\lm x_1)\cdots f_n(x-\lm x_n)\,d\mu_n(x_n)\cdots d\mu_1(x_1)\,dx
\ee
where $d\mu_j(x_j)=\eta_j(x_j)\,d\sigma_j(x_j)$ and $\sigma_{j}$ denotes the normalized surface measure on $S_j$.
\end{defn}

Note that if $A\subseteq[0,1]^d$ with  $d\geq k+1$ and $T_{\Gamma}(1_A,1_A,\dots,1_A)(\lm)>0$, then $A$ must contain a point configuration $\Gamma'=\{x,x+\lm x_1, \dots, x+\lm x_n\}$ with $(0,x_1,\dots, x_n)\in S_\Gamma$ and hence an isometric copy of $\lambda\cdot\Gamma$.

The key to showing that $T_{\Gamma}(1_A,1_A,\dots,1_A)(\lm)$ is positive for certain sets $A$ is to estimate (\ref{LocalCount}) in terms of a suitable uniformity norm localized to a scale $L$ (related to $\lambda$).

\begin{defn}[$U^1(L)$-norm]
For $0<L\ll1$ and functions $f:[0,1]^d\to\R$ we define
\bee
\|f\|_{U^1(L)}=\|f*\vp_L\|_2
\eee
where $\vp_L(x)=L^{-d}\,\vp(L^{-1}x)$ with $\vp=1_{[-1/2,1/2]^d}$.
\end{defn}

Note that if  $A\subseteq[0,1]^d$ with $\A=|A|>0$ and we define
$
f_A:=1_A-\A1_{[0,1]^d}
$,
then
\be\label{relate}
\|f_A\|^2_{U^1(L)}=\int\limits_{\R^d}\left|\frac{|A\cap(t+Q_{L})|}{|Q_{L}|}-|A|\,\right|^2\,dt,
\ee
where $Q_L=[-L/2,L/2]^d$.

\smallskip
Evidently the $U^1(L)$-norm is measuring the mean-square uniform distribution of $A$ on scale $L$. The engine that drives our approach to Theorem \ref{infinite} is the following


\begin{propn}[Generalized von-Neumann]\label{GvN}
Let $0<\VE, \lm\ll 1$. For any $L\leq \VE^6\lm$, $0\leq m\leq n$ and functions
\[f_0,f_1,\dots,f_{m}:[0,1]^{d}\to[-1,1]\]
we have that
\bee
\left|T_{\Gamma}(f_0,f_1,\dots,f_{m},1,\ldots,1)(\lm)\right|\leq \|f_m\|_{U^1(L)}+O_{\Gamma}(\VE).
\eee
\end{propn}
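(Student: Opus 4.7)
My plan is to reduce $T_\Gamma(f_0,\dots,f_m,1,\dots,1)(\lambda)$ to an $L^2$-pairing between $f_m$ and an averaged spherical convolution at scale $\lambda$, and then, by Plancherel, to compare this with the cube average $f_m*\varphi_L$, exploiting the scale separation $L\le\varepsilon^6\lambda$.

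I will first peel off the trivial factors. For each $j>m$ the factor $f_j\equiv 1$, so the integration over $x_j$ is simply $\int\eta_j\,d\sigma_j$. Because $d\mu_j$ depends only on predecessors $x_i$ with $i<j$, I integrate in reverse order $j=n,n-1,\dots,m+1$; each step contributes a bounded smooth factor of the earlier variables, and their product collects into a single weight $\Psi(x_0,x_1,\dots,x_m)$ with $|\Psi|\le C_\Gamma$. This leaves $T$ as an integral involving only $f_0,\dots,f_m$ and the measures $d\mu_1,\dots,d\mu_m$.

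Next I isolate $f_m$. The integration against $d\mu_m$ in $x_m$ expresses the inner integral as $(f_m*\check{\nu}_{\vec x})(x)$, where $\vec x=(x_1,\dots,x_{m-1})$ and $\nu_{\vec x}$ is the pushforward of $\eta_m\,d\sigma_m$ under $x_m\mapsto\lambda x_m$. Using $|f_j|\le 1$ and $|\Psi|\le C_\Gamma$ together with two successive applications of Cauchy--Schwarz, first in $x$ and then in the outer variable $\vec x$, I arrive at
\[
|T|^2 \;\le\; C_\Gamma\int \|f_m*\nu_{\vec x}\|_{L^2(dx)}^{2}\, \prod_{j<m} d\mu_j(x_j).
\]
Plancherel converts the right-hand side to $C_\Gamma\int |\widehat{f}_m(\xi)|^2\, \Phi(\xi)\, d\xi$ with $\Phi(\xi) := \int |\widehat{\nu}_{\vec x}(\xi)|^2 \prod_{j<m} d\mu_j(x_j)$, so the main task is the pointwise domination $\Phi(\xi) \le |\widehat{\varphi}_L(\xi)|^2 + O_\Gamma(\varepsilon^2)$. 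For $|\xi|\le\varepsilon/L$, the scale separation $L\le\varepsilon^6\lambda$ makes both $\widehat{\nu}_{\vec x}(\xi)$ and $\widehat{\varphi}_L(\xi)$ lie within $O(\varepsilon)$ of their values at the origin, and the bound follows by direct comparison. For $|\xi|>\varepsilon/L$, I invoke the stationary-phase estimate $|\widehat{\nu}_{\vec x}(\xi)|\le C(\lambda|\xi_{V(\vec x)}|)^{-d_m/2}$, where $V(\vec x)$ is the affine hull of $S_m(\vec x)$ and $d_m = d - |X_m|\ge 1$, then split $\vec x$ into those with $|\xi_{V(\vec x)}|$ comparable to $|\xi|$ and the complementary ``nearly orthogonal'' set, controlling each contribution separately.

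The main obstacle is this last step when $|X_m|\ge 2$. A single sphere $S_m(\vec x)$ has codimension $|X_m|$ in $\R^d$, and its Fourier transform does not decay in the $|X_m|$-dimensional subspace orthogonal to $V(\vec x)$; full-dimensional decay of $\Phi(\xi)$ can only be recovered after averaging over $\vec x$. This averaging is non-degenerate precisely because $\Gamma$ is proper (the predecessors of $v_m$ together with $v_m$ are in general position) and each predecessor sphere has positive dimension (by $d\ge k+1$), so that $V(\vec x)$ rotates as $\vec x$ ranges over the product of predecessor spheres. Quantifying this rotation to yield the required $O_\Gamma(\varepsilon^2)$ bound uniformly for $|\xi|>\varepsilon/L$ is where the bulk of the technical work lies.
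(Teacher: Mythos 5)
Your reduction follows the same lines as the paper's: integrate out the trivial factors into a weight ($c_{m+1}$ in the paper's notation), isolate $f_m$, apply Cauchy--Schwarz and Plancherel to arrive at $|T|^2\le\int|\widehat{f_m}(\xi)|^2I_m(\lambda\xi)\,d\xi$ with $I_m(\xi)=\int\cdots\int|\widehat{c_{m+1}\mu_m}(\xi)|^2\,d\mu_{m-1}\cdots d\mu_1$, and then compare the multiplier pointwise with $|\widehat{\vp}(L\xi)|^2$. But the entire content of the proposition is the uniform decay $I_m(\xi)\le C_\Gamma|\xi|^{-1/2}$, i.e.\ the fact that averaging the single-sphere bound $|\widehat{c_{m+1}\mu_m}(\xi)|\le\min\{1,(r_\Gamma\,\text{dist}(\xi,\text{span}\,X_m))^{-1/2}\}$ over $x_1,\dots,x_{m-1}$ restores decay in \emph{every} direction, and this is exactly the step you defer (``where the bulk of the technical work lies''). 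As written the proof is therefore incomplete at its crux. The missing device is short: because the configuration is anchored at $x_0=0$, the product measure $d\sigma_{m-1}\cdots d\sigma_1$ is invariant under the simultaneous rotations $(x_1,\dots,x_m)\mapsto(Ux_1,\dots,Ux_m)$, $U\in SO(d)$. Inserting an average over $U$ and using $\text{dist}(\xi,\text{span}\,UX_m)=|\xi|\,\text{dist}(U^{-1}\xi/|\xi|,\text{span}\,X_m)$ converts $I_m(\xi)$ into the spherical average $\int_{S^{d-1}}(1+r_\Gamma|\xi|\,\text{dist}(y,\text{span}\,X_m))^{-1}d\sigma(y)$, which is $O((1+r_\Gamma|\xi|)^{-1/2})$ simply because $\text{span}\,X_m$ is a proper subspace ($|X_m|\le k\le d-1$). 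No local quantification of how $V(\vec x)$ rotates as $\vec x$ ranges over the predecessor spheres is needed; your proposed good/nearly-orthogonal decomposition of $\vec x$ would amount to re-deriving this measure estimate by hand, and it is not at all clear how to do that without the global rotation invariance.

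A second, more minor point: in your low-frequency regime $|\xi|\le\VE/L$ the claim that $\widehat{\nu}_{\vec x}(\xi)$ lies within $O(\VE)$ of its value at the origin is false. Since $L\le\VE^6\lambda$, at the top of that range $\lambda|\xi|$ is as large as $\VE^{-5}$, so the rescaled sphere measure has long since begun to oscillate. The claim is also unnecessary: one only needs $I_m\le1$ together with $1-\widehat{\vp}(L\xi)^2\le\min\{1,4\pi L|\xi|\}$, but the crossover must then be placed at $\lambda|\xi|\sim\VE^{-4}$ (as in the paper) rather than at $|\xi|\sim\VE/L$; with your split the low-frequency error is only $O(\VE)$ rather than $O(\VE^2)$, which after taking square roots degrades the conclusion to $\|f_m\|_{U^1(L)}+O(\sqrt{\VE})$.
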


Here $1$ stands for the indicator function of the unit cube $[0,1]^d$ and $O_{\Gamma}(\VE)$ means a quantity bounded by $C_\Gamma\,\VE$ with a constant $C_\Gamma$ depending only on $\Gamma$. We will also use the notation $f\ll_\Gamma g$ to indicate that $|f|\leq c_\Gamma\, g$ with a constant $c_\Gamma >0$ \emph{sufficiently small} for our purposes.

The above proposition immediately implies the following result for uniformly distributed sets from which we will deduce both parts of Theorem \ref{infinite} in Section \ref{proofs} below.

\begin{cor}\label{Cor}

Let  $\Gamma$ be a proper $k$-degenerate distance graph with $n+1$ vertices in $\R^d$ with $d\geq k+1$.

Let  $\A\in(0,1)$ and
$0<\lm\leq\VE\ll_\Gamma \A^{n+1}$.
If $A\subseteq[0,1]^d$ with $|A|=\A$ satisfies $\|f_A\|_{U^1(\VE^6\lm)}\ll\VE$
, then
\[T_{\Gamma}(1_A,1_A,\dots,1_A)(\lm)\geq \dfrac{c_{0}}{2}\,\A^{n+1}\]
where
\[c_{0}=\iint\cdots\int \,d\mu_n(x_n)\cdots d\mu_1(x_1)\,dx\]
\end{cor}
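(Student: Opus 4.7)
The plan is to decompose $1_A = \alpha\, 1_{[0,1]^d} + f_A$ and expand $T_\Gamma(1_A, \ldots, 1_A)(\lm)$ by multilinearity of the counting functional into $2^{n+1}$ terms. This yields a ``main term,'' in which every slot is filled by $\alpha\, 1_{[0,1]^d}$, together with $2^{n+1}-1$ ``error terms,'' each containing at least one factor of $f_A$.

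For the main term, I would exploit that each $d\mu_j = \eta_j\,d\sigma_j$ is compactly supported, so there exists a constant $R_\Gamma$ with $\supp \mu_j \subseteq \{|x|\leq R_\Gamma\}$ for every $j$. For $x \in [0,1]^d$ at distance $\geq \lm R_\Gamma$ from the boundary, all translates $x - \lm x_j$ remain in $[0,1]^d$ on the support of the integrand; the complementary boundary region has Lebesgue measure $O_\Gamma(\lm)$. This gives
\[
\alpha^{n+1}\,T_\Gamma(1_{[0,1]^d}, \ldots, 1_{[0,1]^d})(\lm) \;\geq\; \alpha^{n+1}\bigl(c_0 - O_\Gamma(\lm)\bigr).
\]

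For each error term, I would locate the largest index $m$ at which the factor $f_A$ appears; every slot of index $>m$ is then $1_{[0,1]^d}$. Pulling out the scalar factors of $\alpha$ leaves an expression of the form
\[
\alpha^{s}\,T_\Gamma\bigl(g_0, \ldots, g_{m-1}, f_A, 1_{[0,1]^d}, \ldots, 1_{[0,1]^d}\bigr)(\lm),
\]
with $s \leq n$ and each $g_i \in \{1_{[0,1]^d}, f_A\}$ taking values in $[-1,1]$. Applying Proposition \ref{GvN} with $L = \VE^6\lm$ and using the hypothesis $\|f_A\|_{U^1(L)} \ll \VE$ bounds this in absolute value by $O_\Gamma(\VE)$.

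Summing the $2^{n+1}-1$ error bounds and combining with the main-term estimate yields
\[
T_\Gamma(1_A, \ldots, 1_A)(\lm) \;\geq\; c_0\,\alpha^{n+1} - O_\Gamma\bigl(\alpha^{n+1}\lm\bigr) - O_\Gamma(\VE),
\]
where the second $O_\Gamma$ absorbs the combinatorial factor $2^{n+1}$. Under the hypothesis $\lm \leq \VE \ll_\Gamma \alpha^{n+1}$, with the implicit constant chosen small enough to also absorb $c_0$, both error contributions are dominated by $\tfrac{c_0}{4}\alpha^{n+1}$, giving the claimed lower bound $\tfrac{c_0}{2}\alpha^{n+1}$. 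The argument is essentially orchestrated bookkeeping around Proposition \ref{GvN}; the only substantive organizational point is arranging the multilinear expansion so that in each error term the final nontrivial slot carries $f_A$, which is precisely the configuration that the generalized von-Neumann inequality is designed to handle.
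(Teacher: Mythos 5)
Your proof is correct and follows essentially the same route as the paper: decompose $1_A=\A 1_{[0,1]^d}+f_A$, isolate the main term, and apply Proposition \ref{GvN} to each error term with $f_A$ placed in the last nontrivial slot. The only (cosmetic) difference is that the paper uses the telescoping identity, producing just $n+1$ error terms of the form $\A^{n-m}T_\Gamma(1_A,\dots,1_A,f_A,1,\dots,1)(\lm)$ instead of your full $2^{n+1}$-term expansion, and your explicit $O_\Gamma(\lm)$ boundary correction to the main term is a harmless (indeed slightly more careful) refinement.
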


\begin{proof}
The result follows immediately from Proposition \ref{GvN} since
\[T_{\Gamma}(1_A,\dots,1_A)(\lm)=c_{0}\,\A^{n+1}+\sum_{m=0}^n \A^{n-m}\,T_{\Gamma}(\underbrace{1_A,\dots,1_A}_{\text{$m$ copies}},f_A,1,\ldots,1)(\lm)\]
where $f_A=1_A-\A1_{[0,1]^d}$.
\end{proof}

We conclude this section with the proof of Proposition \ref{GvN}.
\begin{proof}[Proof of Proposition \ref{GvN}]
Fix $0\leq m\leq n$.  We have
\begin{align*}
&\left|T_{\Gamma}(f_0,f_1,\dots,f_{m},1,\ldots,1)(\lm)\right|\\
&\quad\quad\quad\quad\leq\int\cdots\int\left(\int\Bigl|  \int f_m(x-\lm x_m)\,c_{m+1}(x_1,\ldots,x_m)\,d\mu_m(x_m) \Bigr|\,dx\right)
d\mu_{m-1}(x_{m-1})\cdots d\mu_1(x_{1})
\end{align*}
where 
\be\label{cm}
c_{m+1}(x_1,\dots,x_{m})=\int\cdots\int \,d\mu_n(x_n)\cdots d\mu_{m+1}(x_{m+1})\ee

\noindent
if $0\leq m\leq n-1$ and $c_{n+1}=1$. 
It follows from an application of Cauchy-Schwarz and Plancherel that
\begin{align}\label{CS}
\left|T_{\Gamma}(f_0,f_1,\dots,f_{m},1,\ldots,1)(\lm)\right|^2 \leq
\int|\widehat{f_m}(\xi)|^2 I_m(\lm\,\xi)\,d\xi
\end{align}
where
\be\label{I}
I_m(\xi)=\int\cdots\int
\bigl|\widehat{c_{m+1}\mu_m}(\xi)\bigr|^2\,
d\mu_{m-1}(x_{m-1})\cdots d\mu_1(x_{1})
\ee
with
\[\widehat{c_{m+1}\mu_m}(\xi)=\int c_{m+1}(x_1,\ldots,x_m)\eta_m(x_m)\,e^{-2\pi ix_m\cdot  \xi}\,d\sigma_m (x_m)\]
if   $2\leq m\leq n$ and $I_1=|\widehat{c_2\mu_1}|^2$.
In light of the trivial uniform bound $0\leq I_m(\xi)\leq1$ and the fact that \[\|f_m\|_{U^1(L)}^2=\int|\widehat{f_m}(\xi)|^2|\widehat{\vp}(L\xi)|^2\,d\xi\]
it suffices to establish that
\be\label{key}
I_m(\lm\xi) (1-\widehat{\vp}(L\xi)^2)=O_\Gamma (\VE^{2}).
\ee

Since $0\leq \widehat{\vp}(\xi)^2\leq 1$ for all $\xi\in\R^d$ and $\widehat{\vp}(0)=1$ it follows that
$
0\leq 1-\widehat{\vp}(L\xi)^2\leq\min\{1,4\pi L|\xi|\}.
$
The uniform bound (\ref{key}) thus reduces to establishing the decay estimate
\be\label{Ibound}
I_m(\xi)\leq  \min\{1,C_\Gamma\,|\xi|^{-1/2}\}
\ee
since this would in turn imply that
\bee
I_m(\lm\xi) (1-\widehat{\vp}(L\xi)^2)\leq C_\Gamma\min\{(\lm |\xi|)^{-1/2},\VE^6\lm|\xi|\}\leq C_\Gamma\,\VE^{2}
\eee
whenever $L\leq \VE^6\lm$.

To establish (\ref{Ibound}) we will use the fact that in addition to being trivially bounded by 1, the Fourier transform of $c_{m+1}\mu_m$ also decays for large $\xi$ in certain directions, specifically
\be\label{decay}
|\widehat{c_{m+1}\mu_m}(\xi)|\leq \min\{1, (r_\Gamma\cdot(\text{dist}(\xi,\text{span }X_m))^{-1/2}\}
\ee
uniformly over all  $x_1,\dots,x_{m-1}$ with $x_j\in\supp\eta_j$.
This estimate is an easy consequence of the well-known asymptotic behavior of the Fourier transform of the measure on the unit sphere $S^{d-|X_m|}\subseteq\R^{d-|X_m|+1}$ induced by Lebesgue measure, see for example \cite{Stein}.

Using the fact that the measure
$d\sigma_{m-1}(x_{m-1})\cdots d\sigma_1(x_{1})$ is clearly invariant under the rotations
\[(x_1,\dots,x_m)\to(Ux_1,\dots,Ux_m),\] for any $U\in SO(d)$, together with (\ref{decay}) and the fact that $0\leq\eta_j\leq1$ for $1\leq j\leq m$, then gives
\begin{align*}
I_m(\xi)
&\leq C\int\cdots\int (1+ r_\Gamma\cdot\text{dist}(\xi,\text{span }X_m))^{-1}\,d\sigma_{m-1}(x_{m-1})\cdots d\sigma_1(x_{1})\\
&= C\int\cdots\int \int_{SO(d)}(1+ r_\Gamma\cdot\text{dist}(\xi,\text{span }UX_m))^{-1}\,d\mu(U)\,d\sigma_{m-1}(x_{m-1})\cdots d\sigma_1(x_{1})\\
&= C\int\cdots\int \int_{S^{d-1}}(1+ r_\Gamma|\xi|\cdot\text{dist}(y,\text{span }X_m))^{-1}\,d\sigma(y)\,d\sigma_{m-1}(x_{m-1})\cdots d\sigma_1(x_{1})
\end{align*}
where $\sigma$ denote normalized measure on the unit sphere $S^{d-1}$ in $\R^d$  induced by Lebesgue measure.
Estimate (\ref{Ibound}) then follows from the easy observation that the inner integral above satisfies the uniform estimate
\be
\int_{S^{d-1}}(1+ r_\Gamma|\xi|\cdot\text{dist}(y,\text{span }X_m))^{-1}\,d\sigma(y)=O((1+r_\Gamma|\xi|)^{-1/2}).\qedhere
\ee
\end{proof}

\section{Proof of Theorem \ref{infinite}}\label{proofs}

We will deduce Theorem \ref{infinite} from Corollary \ref{Cor} by localizing to cubes on which our set in suitably uniformly distributed. In the case of Part (i) this is achieved as a direct consequence of the definition of upper Banach density, while for Part (ii) this is achieved via an energy increment argument.

\subsection{Direct Proof of Part (i) of Theorem \ref{infinite}}\label{newreduction}
Let $\VE>0$ and $A\subseteq\R^d$  with $\D^*(A)>0$.\smallskip
\comment{Recall that the \emph{upper Banach density} $\D^*$ of a measurable set $A\subseteq\R^d$ is defined by
\be\label{BD}
\D^*(A)=\lim_{N\rightarrow\infty}\sup_{x\in\R^d}\frac{|A\cap(x+Q_N)|}{|Q_N|},\ee
where $|\cdot|$ denotes Lebesgue measure on $\R^d$ and $Q_N$ denotes the cube $[-N/2,N/2]^d$.}

 The following two facts follow immediately from the definition of upper Banach density, see (\ref{BD}):
\begin{itemize}
\item[(i)] There exist $M_0=M_0(A,\VE)$ such that for all $M\geq M_0$ and all $t\in\R^d$
\[\frac{|A\cap(t+Q_M)|}{|Q_M|}\leq(1+\VE^4/3)\,\D^*(A).\]
\item[(ii)] There exist arbitrarily large $N\in\R$ such that
\[\frac{|A\cap(t_0+Q_N)|}{|Q_N|}\geq(1-\VE^4/3)\,\D^*(A)\]
for some $t_0\in\R^d$.
\end{itemize}

Combining (i) and (ii) above we see that for any $\lm\geq \lm_0:=\VE^{-6}M_0$, there exist $N\geq\VE^{-6}\lm$ and $t_0\in\R^d$ such that
\bee
\frac{|A\cap(t+Q_{\VE^6\lm})|}{|Q_{\VE^6\lm}|}\leq(1+\VE^4)\frac{|A\cap(t_0+Q_N)|}{|Q_N|}
\eee
for all $t\in\R^d$.
Consequently, Theorem \ref{infinite} reduces, via a rescaling of $A\cap(t_0+Q_N)$ to a subset of $[0,1]^d$,
to establishing that if
$\Gamma$ is a proper $k$-degenerate distance graph,
$0<\lm\leq\VE\ll1$ and $A\subseteq[0,1]^d$ is measurable with $|A|>0$ and the property  that
\bee
\frac{|A\cap(t+Q_{\VE^6\lm})|}{|Q_{\VE^6\lm}|}\leq (1+\VE^4)\,|A|
\eee
for all $t\in\R^d$,  then $A$ contains an isometric copy of $\lm\cdot\Gamma$.

Now since $A\cap(t+Q_{\VE^6\lm})$ is only supported in $[-\VE^6\lm,1+\VE^6\lm]^d$ and
\bee\label{Q-QL}
|A|=\int_{\R^d}\frac{|A\cap(t+Q_{\VE^6\lm})|}{|Q_{\VE^6\lm}|}\,dt
\eee
it easily follows that
\bee\label{most t's}
\Bigl|\Bigl\{t\in\R^d\,:\, 0<\frac{|A\cap(t+Q_{\VE^6\lm})|}{|Q_{\VE^6\lm}|}\leq (1-\VE^2)\,|A|\Bigr\}\Bigr|=O(\VE^2)
\eee
and hence that
\[
\|f_A\|^2_{U^1(\VE^6\lm)}=\int\limits_{\R^d}\left|\frac{|A\cap(t+Q_{\VE^6\lm})|}{|Q_{\VE^6\lm}|}-|A|\,\right|^2\,dt=O(\VE^2).
\]
The result thus follows from Corollary \ref{Cor} above provided $\VE\ll \D^*(A)^{n+1}$. \qed

\subsection{Proof of Part (ii) of Theorem \ref{infinite}}\label{22}

\begin{lem}[Localization Principle]\label{local}
Let $A\subseteq[0,1]^d$ with $d\geq k+1$ and $|A|=\A>0$.

Let $\VE>0$ and  $\VE^7\gg L_1\gg L_2\gg\cdots$ be any decreasing sequence with $L_1^{-1}\in\N$ and $L_{j+1}\leq c\,\VE^7L_j$ with $L_{j+1}|L_j$ for all $j\geq1$. If we let $\mathcal{G}_j$ denote the partition of $[0,1]^d$ into cubes of sidelength $L_j$, then
there exists $1\leq j\leq C\VE^{-2}$ such that for all but at most $\VE L_j^{-d}$ of the cubes $Q$ in  $\mathcal{G}_j$ the set $A$ will be 
uniform distributed on the smaller scale $L_{j+1}$ inside $Q$ in the sense that
\be
\frac{1}{|Q|}\int_{Q}\left|\frac{|A\cap Q\cap(t+Q_{L_{j+1}})|}{|Q_{L_{j+1}}|}-\frac{|A\cap Q|}{|Q|}\,\right|^2\,dt\leq\VE.
\ee
\end{lem}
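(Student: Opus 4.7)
The plan is to run a standard energy-increment argument on the partition density energy
\[
E_j := \sum_{Q \in \mathcal{G}_j} \frac{|A \cap Q|^2}{|Q|} = \sum_{Q \in \mathcal{G}_j} \alpha_Q^2\,|Q|, \qquad \alpha_Q := \frac{|A \cap Q|}{|Q|}.
\]
Since $L_{j+1}\mid L_j$, the partition $\mathcal{G}_{j+1}$ refines $\mathcal{G}_j$, and Jensen's inequality (equivalently Pythagoras in $L^2$) gives $E_j \le E_{j+1}$; trivially $E_j \le |A| \le 1$. A Pythagorean identity inside each parent cube yields
\[
E_{j+1} - E_j \;=\; \sum_{Q \in \mathcal{G}_j} v_Q\,|Q|, \qquad v_Q := \frac{1}{|Q|}\sum_{\substack{Q' \in \mathcal{G}_{j+1}\\ Q'\subset Q}}(\alpha_{Q'} - \alpha_Q)^2\,|Q'|,
\]
so telescoping the ceiling $E_{J+1}-E_1 \le 1$ will limit the number of scales at which the discrete variance is appreciable.

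The main step is to compare the sliding-window quantity $u_Q$ in the lemma statement with this discrete partition variance $v_Q$. For each $t$ at distance at least $L_{j+1}/2$ from $\partial Q$, the value $(1_{A\cap Q}*\vp_{L_{j+1}})(t)$ is a convex combination $\sum_{Q'}w_{Q'}(t)\,\alpha_{Q'}$ of at most $2^d$ neighbouring sub-cube densities, with weights $w_{Q'}(t) = |(t+Q_{L_{j+1}})\cap Q'|/|Q_{L_{j+1}}|$ satisfying $\sum_{Q'}w_{Q'}(t)=1$ pointwise and $\int w_{Q'}(t)\,dt = |Q'|$. Applying Jensen in $Q'$ and then Fubini in $t$ therefore bounds the interior contribution to $u_Q$ by $v_Q$. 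The boundary strip $\{t\in Q:\mathrm{dist}(t,\partial Q)\le L_{j+1}/2\}$ has relative measure $O(L_{j+1}/L_j)$ and the integrand is trivially bounded by $1$ on it, so its contribution to $u_Q$ is $O(L_{j+1}/L_j)$. Under the hypothesis $L_{j+1}\le c\,\VE^{7}L_j$ this boundary error is much smaller than $\VE$, yielding
\[
u_Q \;\le\; v_Q + \tfrac{1}{2}\VE.
\]

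Granted this comparison, the conclusion is immediate. If the lemma failed at every scale $1\le j\le J$, then at each such $j$ there would be at least $\VE L_j^{-d}$ cubes with $u_Q>\VE$, and for each such cube $v_Q>\VE/2$, so
\[
E_{j+1} - E_j \;\ge\; \sum_{Q\text{ bad}} v_Q\,|Q| \;\ge\; \tfrac{\VE}{2}\cdot\VE L_j^{-d}\cdot L_j^d \;=\; \tfrac{\VE^{2}}{2}.
\]
Telescoping and $E_j\in[0,1]$ force $J\le 2\VE^{-2}$, producing a good scale $j\le C\VE^{-2}$.

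The only subtle step I anticipate is the sliding-window-to-partition comparison: the window $t+Q_{L_{j+1}}$ is not aligned with $\mathcal{G}_{j+1}$, and this is precisely why the aggressive decay $L_{j+1}\le c\,\VE^{7}L_j$ is imposed—it ensures the unaligned boundary portion of the window contributes much less than the gain $\VE$ one needs from the uniformity condition.
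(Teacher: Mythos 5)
There is a genuine gap in the key comparison step. You assert that for $t$ away from $\partial Q$ one has the identity
\begin{equation*}
(1_{A\cap Q}*\vp_{L_{j+1}})(t)\;=\;\sum_{Q'}w_{Q'}(t)\,\alpha_{Q'},\qquad w_{Q'}(t)=\frac{|(t+Q_{L_{j+1}})\cap Q'|}{|Q_{L_{j+1}}|},
\end{equation*}
but this is false: the left side equals $\sum_{Q'}|A\cap Q'\cap(t+Q_{L_{j+1}})|/|Q_{L_{j+1}}|$, and $|A\cap Q'\cap(t+Q_{L_{j+1}})|$ need not equal $\alpha_{Q'}\,|Q'\cap(t+Q_{L_{j+1}})|$ unless $A$ is already uniformly distributed \emph{inside} each sub-cube $Q'$ --- which is exactly the kind of statement the lemma is trying to produce. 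Consequently the bound $u_Q\le v_Q+\tfrac12\VE$ fails. A concrete counterexample in $d=1$: inside a cube $Q$ partitioned into intervals $Q'_m$ of length $L_{j+1}$, put $A\cap Q'_m$ equal to the left half of $Q'_m$ for even $m$ and the right half for odd $m$. Then every $\alpha_{Q'}=\tfrac12$, so $v_Q=0$, yet the sliding window of width $L_{j+1}$ centered at the common endpoint of $Q'_{2m}$ and $Q'_{2m+1}$ sees density $0$ and at the common endpoint of $Q'_{2m+1}$ and $Q'_{2m+2}$ sees density $1$; hence $u_Q$ is bounded below by an absolute constant. So if the lemma fails at scale $j$ your argument extracts no energy increment at all, and the telescoping collapses. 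The unaligned window is not merely a boundary-of-$Q$ issue controlled by $L_{j+1}\le c\,\VE^7L_j$; it is the heart of the matter at every interior window position.

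The paper's proof handles precisely this point by inserting an intermediate object: if $\|g_j\|_{U^1(L_{j+1})}\ge\VE$ (the sliding-window variance is large), then by averaging over translates there is a \emph{shifted} grid $x_0+\mathcal{G}_{j+1}$ on which the conditional expectation of $g_j$ has $L^2$-norm $\gtrsim\VE$; since that shifted grid is approximately refined by the unshifted grid $\mathcal{G}_{j+2}$ (using $L_{j+2}\ll\VE^2L_{j+1}$), the energy increment is realized two scales down, $\|\mathbb{E}(1_A\,|\,\mathcal{G}_{j+2})\|_2^2\ge\|\mathbb{E}(1_A\,|\,\mathcal{G}_j)\|_2^2+c\VE^2$. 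Your overall architecture (bounded monotone energy, increment of size $\VE^2$ per bad scale, Markov at the good scale) matches the paper's, but without the shifted-grid step the increment cannot be established, and it is not clear how to repair your version while comparing only consecutive unshifted partitions.
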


\smallskip

Before proving Lemma \ref{local} we first show that it, together with Corollary \ref{Cor} (after rescaling), is sufficient to establish Part (ii) of Theorem \ref{infinite}.
Let $\VE\ll_\Gamma\,\A^{n+1}$ and $\{Q_i\}$
denote the cubes of sidelength $L_j$ in the partition $\mathcal{G}_j$ of $[0,1]^d$ that we obtain from Lemma \ref{local}. If we then let $A_i=A\cap Q_i$ and set $\A_i=|A\cap Q_i|/|Q_i|$ it follows from Corollary \ref{Cor} (after rescaling) and H\"older's inequality that for any $\lm\in (\VE^{-6}L_{j+1},\VE L_j)$ we have
\be
T_{\Gamma}(1_A,\dots,1_A)(\lm)\geq \sum_{i=1}^{L_j^{-d}}T_{\Gamma}(1_{A_i},\dots,1_{A_i})(\lm)\geq \dfrac{c_{0}}{4} \, L_j^d\sum_{i=1}^{L_j^{-d}} \A_i^{n+1}\geq \dfrac{c_{0}}{4} \, \left(L_j^d\sum_{i=1}^{L_j^{-d}} \A_i\right)^{n+1}\!\!\!\!=\dfrac{c_{0}}{4} \, |A|^{n+1}.
\ee


\begin{proof}[Proof of Lemma \ref{local}]

Let $\{Q_i\}$
denote the cubes of sidelength $L_j$ in  the partition $\mathcal{G}_j$ of $[0,1]^d$ and \[g_j=1_A-\mathbb{E}(1_A\,|\,\mathcal{G}_j)\] where
\[\mathbb{E}(1_A\,|\,\mathcal{G}_j)(x)=\dfrac{|A\cap Q_i|}{|Q_i|}\]
for each $x\in Q_i$.
 If $\|g_j\|_{U^1(L_{j+1})}\geq\VE$, then by definition
\[\int\Bigl|\dfrac{1}{|Q_{L_{j+1}}|}\int_{x+Q_{L_{j+1}}}g_j(y)\,dy\Bigr|^2\,dx\geq c\, \VE^2.\]
It follows that there must exist a $x_0\in[0,1]^d$ for which the shifted grid $x_0+\mathcal{G}_{j+1}$ satisfies
\[\int\Bigl|\mathbb{E}(g_j\,|\,x_0+\mathcal{G}_{j+1})\Bigr|^2\,dx\geq c\, \VE^2\]
from which one can easily conclude that the (unshifted) refined grid $\mathcal{G}_{j+2}$ satisfies
\be\label{unshifted}
\int\Bigl|\mathbb{E}(g_j\,|\,\mathcal{G}_{j+2})\Bigr|^2\,dx\geq c\, \VE^2\ee
provided $L_{j+2}\ll \VE^2 L_{j+1}$. By orthogonality, it follows immediately from (\ref{unshifted}) and the definition of $g_j$ that
\be
\|\mathbb{E}(1_A\,|\,\mathcal{G}_{j+2})\|_2^2\geq \|\mathbb{E}(1_A\,|\,\mathcal{G}_{j})\|_2^2+c\VE^2
\ee
and hence that there must exist $1\leq j\leq C \VE^{-2}$ such that $\|g_j\|_{U^1(L_{j+1})}\leq\VE$ from which it follows that
\[\sum_{i=1}^{L_j^{-d}}\int\Bigl|\dfrac{1}{|Q_{L_{j+1}}|}\int_{x+Q_{L_{j+1}}}(1_{A_i}-\A_i1_{Q_i})(y)\,dy\Bigr|^2\,dx\leq C \VE^2\]
provided $L_{j+1}\ll\VE^2 L_j$.
\end{proof}



\section{A Second Proof of Theorem \ref{infinite}}

We conclude  by presenting a second proof of Theorem \ref{infinite} which is closer in spirit to Bourgain's original proof of Theorem \ref{BourSimp}. We include this in order to highlight the simplicity and directness of our approach to Part (i) of Theorem \ref{infinite} above, but also to emphasize that our approach to Part (ii) of Theorem \ref{infinite} is in essence a physical space reinterpretation of Bourgain original approach.

\comment{
\begin{thm2}
Let $\Gamma$ be a proper $k$-degenerate distance graph and $0<\A<1$.
There exists $c(\A,\Gamma)>0$ such that any $A\subseteq[0,1]^d$ with $d\geq k+1$ and $|A|=\A$ will contain an isometric copy of $\lm\cdot\Gamma$ for all $\lambda$ in some interval of length at least $c(\A,\Gamma)$.
\end{thm2}
}

\subsection{Reducing Theorem \ref{infinite} to a Dichotomy between Randomness and  Structure}

Let $\Gamma$
be a proper $k$-degenerate distance graph in $[0,1]^d$ with $d\geq k+1$.
As we shall see, Theorem \ref{infinite} is an immediate consequence of the following proposition which reveals that if $A\subseteq[0,1]^d$ has positive measure but does not contain an isometric copy of $\lm\cdot\Gamma$ for all $\lm$ in a given interval, then this ``non-random" behavior is detected by the Fourier transform of the characteristic function of $A$ and results in ``structural information''
, specifically a concentration of its $L^2$-mass on appropriate annuli.

\begin{propn}[Dichotomy]\label{dic}
Let $\Gamma=\Gamma(V,E)$
 be a proper $k$-degenerate distance graph in $[0,1]^d$ with $d\geq k+1$.

If $A\subseteq[0,1]^d$ with $|A|>0$, $0<a\leq b\ll \VE^{4}$ with $0<\VE\ll_\Gamma|A|^{n+1}$, and $A$ does \underline{not} contain an isometric copy of $\lm\cdot\Gamma$ for some $\lambda$ in $[a,b]$, then
\be\label{ii} \int\limits_{\VE^2/b\leq|\xi|\leq1/\VE^{2}a} |\widehat{1_A}(\xi)|^2\,d\xi \gg |A|^{2n+2}\ee
with the implied constant above independent of $a$, $b$, and $\VE$.
\end{propn}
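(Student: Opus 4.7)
The plan is to argue by contrapositive: assume $T_\Gamma(1_A,\ldots,1_A)(\lambda)=0$ for every $\lambda\in[a,b]$ and deduce the lower bound on the Fourier $L^2$-mass of $1_A$ on the annulus. Writing $1_A=\alpha 1_{[0,1]^d}+f_A$ and telescoping as in the proof of Corollary~\ref{Cor}, one obtains the pointwise identity
\[
0=c_0\alpha^{n+1}+\sum_{m=0}^n\alpha^{n-m}\,T_m(\lambda),\qquad T_m(\lambda):=T_\Gamma(\underbrace{1_A,\ldots,1_A}_{m},f_A,1,\ldots,1)(\lambda),
\]
for each $\lambda\in[a,b]$. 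Integrating over $\lambda\in[a,b]$ and applying the pigeonhole principle produces some $m\in\{0,\ldots,n\}$ with $|\int_a^b T_m(\lambda)\,d\lambda|\gtrsim_\Gamma\alpha^{m+1}(b-a)$; Cauchy--Schwarz in $\lambda$ together with the Plancherel estimate~(\ref{CS}) from the proof of Proposition~\ref{GvN} then yields
\[
\alpha^{2m+2}(b-a)\lesssim_\Gamma\int|\widehat{f_A}(\xi)|^2\Bigl(\int_a^b I_m(\lambda\xi)\,d\lambda\Bigr)\,d\xi.
\]

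I then split the $\xi$-integral at $R_1:=\varepsilon^2/b$ and $R_2:=1/(\varepsilon^2 a)$. On $|\xi|\geq R_2$ one has $\lambda|\xi|\geq\varepsilon^{-2}$ for every $\lambda\in[a,b]$, so~(\ref{Ibound}) gives $I_m(\lambda\xi)\leq C_\Gamma\varepsilon$ and the high-frequency contribution is bounded by $C_\Gamma\varepsilon(b-a)\alpha$, negligible against $\alpha^{2m+2}(b-a)$ under $\varepsilon\ll\alpha^{n+1}$. On the middle annulus $R_1\leq|\xi|\leq R_2$ the inner integral is at most $b-a$, producing the desired term $(b-a)\int_{R_1\leq|\xi|\leq R_2}|\widehat{f_A}|^2\,d\xi$ on the right-hand side.

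The main obstacle is the low-frequency piece $|\xi|\leq R_1$, where $\lambda|\xi|\leq\varepsilon^2$ leaves $I_m(\lambda\xi)$ close to its positive constant value $I_m(0)$ with no room for Fourier decay, and the pointwise bound $|\widehat{f_A}(\xi)|\lesssim\alpha|\xi|$ integrates to a prohibitively large quantity. I expect to handle this by exploiting the explicit value at $\lambda=0$: since $T_\Gamma(1_A,\ldots,1_A)(0)=c_0\alpha$, subtracting the corresponding identity at $\lambda=0$ rewrites the hypothesis as $\sum_m\alpha^{n-m}(T_m(\lambda)-T_m(0))=-c_0\alpha$ on $[a,b]$, to which the Plancherel argument now applies with the refined kernel $|\widehat{c_{m+1}\mu_m}(\lambda\xi)-\widehat{c_{m+1}\mu_m}(0)|^2$. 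Taylor expansion at $\xi=0$ gives $O((\lambda|\xi|)^2)=O(\varepsilon^4)$ on $|\xi|\leq R_1$, providing the decisive saving that renders the low-frequency contribution negligible. A refined pigeonhole analysis of the rewritten identity (whose pointwise bounds $|T_m-T_m(0)|\lesssim\alpha$ force the dominant index to be $m=n$ when $\alpha$ is small), together with a careful reconciliation at high frequencies --- where the subtracted kernel loses its $I_m$-decay, requiring one to split $\widehat{f_A}$ into Fourier-low and Fourier-high pieces and apply the refined and original bounds separately --- yields $\int_{R_1\leq|\xi|\leq R_2}|\widehat{f_A}|^2\,d\xi\gtrsim_\Gamma\alpha^{2n+2}$.

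Finally, writing $\widehat{f_A}=\widehat{1_A}-\alpha\widehat{1_{[0,1]^d}}$ and using the standard decay $\int_{|\xi|\geq R_1}|\widehat{1_{[0,1]^d}}(\xi)|^2\,d\xi\lesssim 1/R_1=b/\varepsilon^2\leq\varepsilon^2$ (from $b\leq\varepsilon^4$), the triangle inequality together with $\alpha^2\varepsilon^2\ll\alpha^{2n+2}$ (which follows from $\varepsilon\ll\alpha^{n+1}$) delivers the claimed bound $\int_{R_1\leq|\xi|\leq R_2}|\widehat{1_A}|^2\,d\xi\gg|A|^{2n+2}$.
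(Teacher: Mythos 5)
There is a genuine gap, and it sits exactly where you flag ``the main obstacle.'' The decomposition $1_A=\alpha 1_{[0,1]^d}+f_A$ gives no control over $\int_{|\xi|\le \VE^2/b}|\widehat{f_A}(\xi)|^2 I_m(\lambda\xi)\,d\xi$: the only a priori bound is $\|f_A\|_2^2=\alpha(1-\alpha)$, which dwarfs the target $\alpha^{2n+2}$, and nothing prevents $f_A$ from carrying essentially all of its $L^2$-mass below frequency $\VE^2/b$. Your proposed repair does not close this. First, $T_m(\lambda)-T_m(0)$ is not of the form to which the single-slot Cauchy--Schwarz/Plancherel bound (\ref{CS}) applies: when $\lambda\to 0$ \emph{every} factor $1_A(x-\lambda x_i)$, $i<m$, changes, not just the $m$-th one, so the difference is not captured by the kernel $|\widehat{c_{m+1}\mu_m}(\lambda\xi)-\widehat{c_{m+1}\mu_m}(0)|^2$, and the extra cross terms involve differences of indicator functions that are not small in any useful norm. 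Second, even granting the refined kernel, it does not decay at high frequencies: for $|\xi|\ge 1/(\VE^{2}a)$ it is comparable to the constant $|\widehat{c_{m+1}\mu_m}(0)|^2$, so the region $|\xi|\ge 1/(\VE^{2}a)$ can contribute up to $\sim\alpha\gg\alpha^{2n+2}$, and the ``careful reconciliation'' you defer to is precisely the step for which no argument is visible. (A smaller, fixable issue: the hypothesis only gives $T_\Gamma(1_A,\dots,1_A)(\lambda_0)=0$ for a \emph{single} $\lambda_0\in[a,b]$, not for every $\lambda\in[a,b]$, so the averaging over $[a,b]$ is not available; but the pigeonhole can be run pointwise at $\lambda_0$, so this alone is not fatal.)

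The paper's resolution is to replace the constant $\alpha$ by a smoothing adapted to the scale $b$: with $g=1_A*\psi_{\VE^{-1}b}$ one telescopes $T_\Gamma(f,\dots,f)(\lambda)$ against $g$ rather than against $\alpha$, using $|g(x-\lambda z)-g(x)|\ll\VE$ to convert the untouched slots into powers of $g$. The point is twofold: the main term satisfies $\int 1_A g^n\gg|A|^{n+1}$ by H\"older and Plancherel (Lemma \ref{gprop}), and, crucially, $|\widehat{1_A}(\xi)-\widehat{g}(\xi)|=|\widehat{1_A}(\xi)|\,|1-\widehat{\psi}(\VE^{-1}b\xi)|\le\VE\,|\widehat{1_A}(\xi)|$ for $|\xi|\le\VE^2/b$, so the low-frequency contribution is $\le\VE^2|A|$ for free, with no subtraction at $\lambda=0$ and no loss of kernel decay at high frequencies. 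If you want to salvage your outline, the object you subtract in the distinguished slot must itself have negligible low-frequency Fourier content relative to $\widehat{1_A}$; the constant $\alpha 1_{[0,1]^d}$ does not have this property, but the convolution $1_A*\psi_{\VE^{-1}b}$ does.
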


\begin{proof}[Proof that Proposition \ref{dic} implies Theorem \ref{infinite}] We shall first establish Part (ii) of Theorem \ref{infinite}, so we start by
letting $A\subseteq[0,1]^d$ with $|A|>0$.
 For any fixed $0<\VE\ll_\Gamma|A|^{n+1}$, let $\{\mathcal{I}_j\}_{j=1}^{J(\VE)}$ denote a sequence of intervals with $\mathcal{I}_j:=[a_j,b_j]$ satisfying
\be\label{disjoint}
b_{j+1}\ll\VE^4a_j
\ee
and $b_1\ll\VE^4$
with the property that for each $1\leq j\leq J(\VE)$ there exists a $\lm\in\mathcal{I}_j$ such that
 \be
x+\lm\cdot U(\Delta)\nsubseteq A\ee
for all $x\in A$ and $U\in SO(d)$. Proposition \ref{dic}, together with (\ref{disjoint}),  would then imply that
\be
J(\VE)\,\VE^2\leq\sum_{j=1}^{J(\VE)}
\int\limits_{\VE^2/b_j\leq|\xi|\leq1/\VE^{2}a_j} |\widehat{1_A}(\xi)|^2\,d\xi\leq\int |\widehat{1_A}(\xi)|^2\,d\xi
\ee
a contradiction if $J(\VE)\gg\VE^{-2}$ since by Plancherel we know that
$\int |\widehat{1_A}(\xi)|^2\,d\xi=|A|\leq1.$

\smallskip

To establish Part (i) of Theorem \ref{infinite} with this approach we will argue indirectly and thus suppose that $A\subseteq \R^d$ is a set with $\delta^*(A)>0$ for which the conclusion of Part (i) of Theorem \ref{infinite} fails to hold, namely that there exist arbitrarily large $\lm\in\R$ for which $A$ does not contain an isometric copy of $\lm \cdot\Gamma$.

We now let  $0<\A<\delta^*(A)$, $0<\VE\ll_\Gamma\A^{n+1}$, and fix $J\gg\VE^{-2}$ as above. By our indirect assumption we can choose a sequence $\{\lm_j\}_{j=1}^J$ with the property that
$\lm_{j+1}\ll\VE^4\lm_j$ for all $1\leq j\leq J-1$ and $A$ does not contain an isometric copy of $\lm_j \cdot\Gamma$ for each $1\leq j\leq J$.
It follows from the definition of upper Banach density that exist $N\in\R$ with $N\gg\lm_1$ and $t_0\in\R^d$ for which
\[\frac{|A\cap(t_0+Q_N)|}{|Q_N|}\geq\alpha.\]
Rescaling $A\cap(t_0+Q_N)$ to a subset of $[0,1]^d$ and arguing as in the proof of Part (ii) above but this time with $b_j=\lm_j/N$ again leads to a contradiction.
\end{proof}



\subsection{Proof of Proposition \ref{dic}}

Let $f=1_A$ and $\Gamma=\{0,v_1,\dots,v_n\}$ be a fixed proper $k$-degenerate distance graph.
We will utilize the existence of a suitably \emph{smoothed} version of $f$ with the certain properties, specifically
\begin{lem}\label{gprop}
For any $\VE>0$ there exists a function $g:\R^d\to(0,1]$, an appropriate smoothing of $f$, such  that
\be\label{1}
|g(x-\lm z)-g(x)|\ll\VE
\ee
uniformly in $x\in[0,1]^d$ and $|z|\leq 1$. Moreover, if $\VE\ll|A|^{n+1}$, then
\be\label{gmain}
\int f(x)g(x)^n\,dx\gg |A|^{n+1}.
\ee
\end{lem}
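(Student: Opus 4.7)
The plan is to take $g$ to be a simple convolution-type smoothing of $f = 1_A$ at unit scale, shifted by a positive constant to ensure strict positivity.

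I would first fix, once and for all, a nonnegative $\psi \in C_c^\infty(\R^d)$ with $\operatorname{supp}(\psi) \subset [-2,2]^d$, $\psi(t) \geq \tfrac{1}{2}$ for $t \in [-1,1]^d$, and $\|\psi\|_\infty \leq \tfrac{1}{2}$ (easily arranged by taking $\psi = \tfrac{1}{2}\phi$ for a smooth cutoff $\phi$ that equals $1$ on $[-1,1]^d$ and is supported in $[-2,2]^d$). I would then set
\[ g(x) := (1_A * \psi)(x) + \tfrac{1}{2}. \]
The bounds $\tfrac{1}{2} \leq g \leq 1$ on $\R^d$ are immediate, since $0 \leq 1_A * \psi \leq \|\psi\|_\infty \cdot |A| \leq \tfrac{1}{2}$; in particular $g(\R^d) \subset (0,1]$.

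To verify (\ref{1}), I would bound the Lipschitz constant using $\nabla g = 1_A * \nabla \psi$, which gives $\|\nabla g\|_\infty \leq \|\nabla \psi\|_1 =: C_\psi$, a fixed $\VE$-independent constant. Since $\lambda \leq b \ll \VE^4$ by the hypotheses of Proposition \ref{dic}, this yields
\[ |g(x - \lm z) - g(x)| \leq C_\psi \lm |z| \leq C_\psi b \ll \VE \]
uniformly in $x \in [0,1]^d$ and $|z| \leq 1$.

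For (\ref{gmain}), the key geometric observation is that for any $x, y \in [0,1]^d$ one has $x - y \in [-1,1]^d$, and hence $\psi(x-y) \geq \tfrac{1}{2}$ by construction. Therefore, for every $x \in [0,1]^d$,
\[ g(x) \geq \int_A \psi(x-y)\,dy \geq \tfrac{1}{2}|A|, \]
and integrating yields
\[ \int f(x) g(x)^n\,dx \geq \int_A \left(\tfrac{|A|}{2}\right)^n dx = \tfrac{1}{2^n}|A|^{n+1}. \]

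I do not anticipate any serious obstacle. The only delicate point is arranging for $g$ to take values strictly in $(0,1]$ rather than in $[0,1]$, which is why I would include both the additive constant $\tfrac{1}{2}$ and the normalization $\|\psi\|_\infty \leq \tfrac{1}{2}$. I note that the hypothesis $\VE \ll |A|^{n+1}$ stated in the lemma plays no role in this construction, which suggests it is needed elsewhere in the Fourier-analytic proof of Proposition \ref{dic} rather than in the lemma itself.
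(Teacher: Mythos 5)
Your construction does verify the two displayed properties: a unit-scale mollification plus an additive constant is Lipschitz with an $\VE$-independent constant, so (\ref{1}) holds since $\lm\leq b\ll\VE^4$, and the pointwise bound $g\gtrsim|A|$ on $[0,1]^d$ makes (\ref{gmain}) immediate. But this misses what ``an appropriate smoothing'' has to mean here, and the function you build cannot be substituted into the proof of Proposition \ref{dic}. The same $g$ is required, a few lines after the lemma is invoked, to satisfy (\ref{gfourier}), namely $|\widehat{f}(\xi)-\widehat{g}(\xi)|\leq\VE\,|\widehat{f}(\xi)|$ for all $|\xi|\leq\VE^2b^{-1}$; this is what turns (\ref{wf-g}) into (\ref{MM}) and produces the frequency localization $|\xi|\geq\VE^2/b$ in the conclusion of the proposition. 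For that, $g$ must agree with $f$ on all frequencies up to the very large scale $\VE^2/b$, which forces the mollifier to live at the very small spatial scale $\VE^{-1}b$: the paper takes $g=f*\psi_{\VE^{-1}b}$ with $\widehat{\psi}$ supported in the unit ball and deduces (\ref{gfourier}) from (\ref{cutoff2}), and this additional property is established in Section \ref{3.2} as part of the same construction. Your unit-scale convolution annihilates $\widehat{f}$ for $|\xi|\gg1$, so $f-g$ carries essentially all of the $L^2$ mass of $f$ at moderate frequencies rather than only its high-frequency part; in addition the constant $\tfrac12$ gives $f-g$ a nonzero mean. Either defect destroys the passage from (\ref{wf-g}) to (\ref{MM}).

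Your closing remark that the hypothesis $\VE\ll|A|^{n+1}$ ``plays no role'' is the symptom of this. Once the smoothing scale is $\VE^{-1}b\ll\VE^{3}$, the cheap pointwise bound $g\gtrsim|A|$ on $[0,1]^d$ is no longer available, and (\ref{gmain}) becomes the genuinely nontrivial part of the lemma: the paper first proves $\int fg\geq(1-C\VE)|A|^2$ via Parseval and the positivity $0\leq\widehat{\psi}\leq1$ (so that $\int fg=\int|\widehat{f}|^2\widehat{\psi}\geq\int g^2\geq\bigl(\int_{[0,1]^d}g\bigr)^2$), together with an estimate showing that $g$ loses only $O(\VE|A|)$ of its mass outside $[0,1]^d$ --- and it is exactly there that $\VE\ll|A|$ is used --- before concluding by H\"older. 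So the fix is not cosmetic: the construction must be carried out at scale $\VE^{-1}b$, after which (\ref{gmain}) requires a real argument.
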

The proof of Lemma \ref{gprop} is straightforward and presented in Section \ref{3.2} below. 
Assuming for now the existence of  a function $g$ with
property (\ref{1}) it follows that
\be\label{sum}
T_{\Gamma}(f,f,\dots,f)(\lm)=\int f(x)g(x)^n\,dx+\sum_{m=1}^n T_{\Gamma}(fg^{n-m},f,\dots, f,f-g, \!\!\!\underbrace{1,\dots,1}_{\text{$n-m$ copies}}\!\!\!)(\lm)+O(n\VE)\ee
where, as in (\ref{LocalCount}) in Section \ref{cf}, we define
\[T_{\Gamma}(f_0,f_1,\dots,f_{n})(\lm)
=\iint\cdots\int f_0(x)f_1(x-\lm x_1)\cdots f_n(x-\lm x_n)\,d\mu_n(x_n)\cdots d\mu_1(x_1)\,dx\]
with $d\mu_j(x_j)=\eta_j(x_j)\,d\sigma_j(x_j)$ and $\sigma_{j}$ denotes the normalized surface measure on $S_j$.

If $A$ does \underline{not} contain an isometric copy of $\lm\cdot\Gamma$ for some $\lambda$ in $[a,b]$, then it clearly follows that \[T_{\Gamma}(f,f,\dots,f)(\lm)=0.\]
In light of (\ref{gmain}) and (\ref{sum}) it follows that if  $\VE\ll|A|^{n+1}/n$
 then 
there must exist $1\leq m\leq n$ such that
\be
\int\cdots\int\left(\int\left|\int [f-g](x-\lm x_m)\,c_{m+1}(x_1,\ldots,x_m)\,d\mu_{m}(x_m)\right|dx\right)\,d\mu_{m-1}(x_{m-1})\cdots d\mu_1(x_{1}) \gg |A|^{n+1}
\ee
with $c_{m+1}$ defined as before in equation (\ref{cm}) above.
It then follows from an application of Cauchy-Schwarz and Plancherel that
\be\label{wf-g}
\int |\widehat{f}(\xi)-\widehat{g}(\xi)|^2\,I_m(\lm\xi)\,d\xi \gg |A|^{2n+2}
\ee
with $I_m$ again defined as before in equation (\ref{I}) above.
The fact that $g$ will be taken to be a sufficient smoothing of $f$ ensures that its Fourier transform satisfies
\be\label{gfourier}
|\widehat{f}(\xi)-\widehat{g}(\xi)|\leq \VE\,|\widehat{f}(\xi)|
\ee
provided $|\xi|\leq\VE^2b^{-1}$, see Section \ref{3.2} below.
This, together with the fact that $I_m(\xi)$ is bounded by 1 uniformly in $\xi$, and Plancherel, ensures that (\ref{wf-g}) implies
\be\label{MM}
\int\limits_{\VE^2/b\leq|\xi|} |\widehat{f}(\xi)|^2\,I_m(\lm\xi)\,d\xi \gg |A|^{2n+2}.
\ee

Estimate (\ref{ii}), and hence Proposition \ref{dic}, then follows easily from estimate (\ref{MM}) and our previously established estimates for $I_m$, namely (\ref{Ibound}).







\subsection{A smooth cutoff function and  Proof of Lemma \ref{gprop}}\label{3.2}
\subsubsection{A smooth cutoff function}\label{properties}

Let
 $\psi:\R^d\rightarrow(0,\infty)$ be a Schwartz function that satisfies
\[1=\widehat{\psi}(0)\geq\widehat{\psi}(\xi)\geq0\quad\quad\text{and}\quad\quad \widehat{\psi}(\xi)=0 \ \ \text{for} \ \ |\xi|>1.\]
As usual, for any given $t>0$, we define
\be
\psi_{t}(x)=t^{-d}\psi(t^{-1}x).\ee

First we record the trivial observation that
\be\label{=1}\int\psi_{t}(x)\,dx=\int\psi(x)\,dx=\widehat{\psi}(0)=1\ee
as well as the simple, but important, observation that $\psi$ may be chosen so that
\be\label{cutoff2}
\bigl| 1-\widehat{\psi}_{t}(\xi)\bigr|=\bigl| 1-\widehat{\psi}(t\xi)\bigr|\ll\min\{1,t|\xi|\}.
\ee

Finally we record a formulation, appropriate to our needs, of the fact that for any given small parameter $\VE$, our cutoff function $\psi_{t}(x)$ will be essentially supported where $|x|\leq\VE^{-1}t$ and is approximately constant on smaller scales. More precisely,

\begin{lem}\label{uncertainty} Let $\VE>0$ and $t>0$, then
\be\label{5.3}
\int_{|y|\geq\VE^{-1}t}\psi_{t}(y)\,dy \ll \VE.
\ee
and
\be\label{5.1}
\int\bigl|\psi_{t}(y-\lm z)-\psi_{t}(y)\bigr|\,dy \ll \VE
\ee
uniformly for $|z|\leq1$, provided $t\gg\VE^{-1}\lm$.
\end{lem}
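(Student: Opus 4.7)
The plan is to rescale both integrals to unit scale, where they reduce to standard properties of the fixed Schwartz function $\psi$. The key observation is the scaling identity $\psi_{t}(y)\,dy = \psi(x)\,dx$ under the substitution $y = tx$: the dependence on $t$ drops out entirely. This also sends the ball $|y|\geq \VE^{-1}t$ to $|x|\geq \VE^{-1}$, and in the second estimate it rescales $\lm z$ to $u := \lm z/t$ with $|u|\leq \lm/t \ll \VE$, thanks to the hypothesis $t\gg \VE^{-1}\lm$.

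For the first estimate, the rescaled integral is $\int_{|x|\geq \VE^{-1}}\psi(x)\,dx$, and since $\psi$ is Schwartz we have $|\psi(x)|\leq C_N(1+|x|)^{-N}$ for every $N$. Choosing $N = d+1$ gives a tail contribution of order $\VE^{N-d} = \VE$, which is exactly the bound claimed.

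For the second estimate, the rescaled integral becomes $\int|\psi(x-u) - \psi(x)|\,dx$ with $|u|\ll\VE$. I would write $\psi(x-u) - \psi(x) = -\int_0^1 \nabla\psi(x-su)\cdot u\,ds$ and apply Fubini (legitimate because $\nabla\psi$ is Schwartz and therefore integrable) to obtain the bound $|u|\,\|\nabla\psi\|_{L^1(\R^d)}\ll \VE$. The main thing to keep straight is matching the powers of $\VE$ in the scaling hypothesis with the resulting bounds; beyond that, there is no genuine obstacle, as the lemma is a quantitative restatement of the uncertainty principle, asserting that $\psi_t$ is essentially supported in the ball of radius $t$ and varies only on scale $t$.
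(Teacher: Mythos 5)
Your proof is correct and follows essentially the same route as the paper: rescale to unit scale so the $t$-dependence disappears, bound the tail of $\psi$ by $(1+|x|)^{-d-1}$ to get (\ref{5.3}), and bound the translation difference by $|u|\,\|\nabla\psi\|_{L^1}\ll \lm/t\ll\VE$ to get (\ref{5.1}). No issues.
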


\begin{proof}[Proof of Lemma \ref{uncertainty}]
Estimate (\ref{5.3}) is easily verified using the fact that $\psi$ is a Schwartz function on $\R^d$ as
\[\int_{|y|\geq\VE^{-1}t}\psi_{t}(y)\,dy= \int_{|y|\geq\VE^{-1}}\psi(y)\,dy
\ll \int_{|y|\geq\VE^{-1}}(1+|y|)^{-d-1}\,dy\ll\, \VE.
\]

To verify estimate (\ref{5.1}) we make use of the fact that both
 $\psi$ and its derivative are rapidly decreasing, specifically
\[
\int\bigl|\psi_{t}(y-\lm z)-\psi_{t}(x)\bigr|\,dy
\leq \int\bigl|\psi(y-\lm z/t)-\psi(y)\bigr|\,dy
\ll \frac{\lm}{t} \int(1+|y|)^{-d-1}\,dy\ll \frac{\lm}{t}.\qedhere\]
\end{proof}

\subsubsection{Proof of Lemma \ref{gprop}}
Let $g=f*\psi_{\VE^{-1}b}.$

We first note that estimates (\ref{gfourier}) and  (\ref{1}) follow immediately from (\ref{cutoff2}) and  (\ref{5.1}) respectively. In order to establishing the remaining ``main term'' estimate (\ref{gmain}), we
need only establish that if  $\VE\ll|A|^{n+1}$, then
\be\label{k=1}
\int f(x)g(x)\,dx\geq (1-C\VE)\,|A|^2
\ee
for some constant $C>0$, since by H\"older we would then obtain
\[(1-C\VE)^n\,|A|^{2n}\leq\Bigl(\int f(x)g(x)\,dx\Bigr)^n\leq|A|^{n-1}\int f(x)g(x)^n\,dx
\]
from which (\ref{gmain}) clearly follows for sufficiently small $\VE>0$.

To establish (\ref{k=1}) we first note that Parseval, the fact that $0\leq\widehat{\psi}\leq 1$, and a final application of Cauchy-Schwarz gives
\be
\int f(x)g(x)\,dx=\int |\widehat{f}(\xi)|^2\widehat{\psi}(\VE^{-1}b\,\xi)\,d\xi\geq \int |\widehat{f}(\xi)|^2|\widehat{\psi}(\VE^{-1}b\,\xi)|^2\,d\xi=\int g(x)^2\,dx\geq \Bigl(\int_{[0,1]^d} g(x)\,dx\Bigr)^2.
\ee

Establishing (\ref{k=1}) therefore  reduces to showing that if  $\VE\ll|A|^3$, then
\be\label{finally}
\int_{[0,1]^d} g(x)\,dx\geq (1-C\VE)|A|
\ee
for some constant $C>0$.
To establish (\ref{finally}) we use (\ref{=1}) and write 
\be
|A|=\int_{\R^d} g(x)\,dx=\int_{[0,1]^d}g(x)\,dx\,\,\,\,+\int\limits_{\{x\in\R^d\,:\,\text{dist}(x,[0,1]^d)\geq\VE^{-2}b\}} \!\!\!\!\!\!\!\!\!\!\!\!g(x)\,dx\,\,\,\,+\int\limits_{\{x\in\R^d\,:\,0<\text{dist}(x,[0,1]^d)<\VE^{-2}b\}} \!\!\!\!\!\!\!\!\!\!\!\! g(x)\,dx.\ee

The fact that $b\leq\VE^{4}$ ensures that
 \be\big|\{x\in\R^d\,:\,0<\text{dist}(x,[0,1]^d)<\VE^{-2}b\}\big|\ll\VE^{2}\ee
and hence, since $\VE\ll|A|$ and $0\leq g\leq 1$, that
\[\int\limits_{\{x\in\R^d\,:\,0<\text{dist}(x,[0,1]^d)<\VE^{-2}b\}} \!\!\!\!\!\!\!\!\!\!\!\! g(x)\,dx\ll\VE^{2}\leq \VE|A|\]
while  (\ref{5.3}) ensures that
\be\int\limits_{\{x\in\R^d\,:\,\text{dist}(x,[0,1]^d)\geq\VE^{-2}b\}} \!\!\!\!\!\!\!\!\!\!\!\!g(x)\,dx \leq |A|\int_{|y|\gg \VE^{-2}b} \psi_{\VE^{-1}b}(y)\,dy \ll \VE |A|\ee
which completes the proof. \qed

\bigskip

\end{document}